\def\tto{\;{\lower 1pt \hbox{$\rightarrow$}}\kern -10pt
\hbox{\raise 2pt \hbox{$\rightarrow$}}\;}
\newtheorem{Theorem}{Theorem}[section]
\newtheorem{Proposition}[Theorem]{Proposition}
\theoremstyle{definition}
\theoremstyle{remark}
\newtheorem{Remark}[Theorem]{Remark}
\begin{document}

\title[Strongly Entanglement Breaking channels]{Characterizations of Strongly Entanglement Breaking channels for infinite-dimensional quantum systems}

\author[Muoi]{Bui Ngoc Muoi}
\address[Bui Ngoc Muoi]{Department of Applied Mathematics, The Hong Kong Polytechnic University, Hung Hom, Hong Kong, and Department of Mathematics, Hanoi Pedagogical University 2, Vinh Phuc, Vietnam.}
\email{\tt buingocmuoi@hpu2.edu.vn}

\author[Sze]{Nung-Sing Sze}
\address[Nung-Sing Sze]{Department of Applied Mathematics, The Hong Kong Polytechnic University, Hung Hom, Hong Kong.}
\email{\tt raymond.sze@polyu.edu.hk}

\thanks{Corresponding author: Nung-Sing Sze, E-mail: raymond.sze@polyu.edu.hk}
%\date{\today}%

%\dedicatory{}

\begin{abstract}
Entanglement breaking (EB) channels, as completely positive and trace-preserving linear operators, sever the entanglement between the input system and other systems. In the realm of infinite-dimensional systems, a related concept known as strongly entanglement breaking (SEB) channels emerges. This paper delves into characterizations of SEB channels, delineating necessary and sufficient conditions for a channel to be classified as SEB, especially with respect to the commutativity of its range. Moreover, we demonstrate that every closed self-adjoint subspace of trace-zero operators, with the trace norm, is the null space of a SEB channel.
\end{abstract}

\keywords{Quantum channels, strongly entanglement breaking channels, infinite-dimensional quantum systems, commutative range, null space.}

\subjclass[2020]{Primary 81P40; Secondary 47L07, 81R15.}

\maketitle

\section{Introduction}

Entanglement breaking (EB) channels are channels which disrupt entanglement with other quantum systems. 
Specifically, when an EB channel is applied to a component of a composite system, any entanglement in the input state transforms into separable states.
This concept was first introduced in \cite{HSR03} as an extension of the classical-quantum and quantum-classical channels considered in \cite{Holevo98}. The characterization of these channels has been the subject of intensive research for finite-dimensional systems, as seen in works such as \cite{BaHo24, HSR03, KLOPR21, KLPR22}. Some of these findings have been expanded to infinite-dimensional systems in \cite{DSS24, HSW05, He13, LD15}.

We recall some notions in quantum information theory, see e.g. \cite{Hou10,HSW05}.
Consider infinite-dimensional Hilbert spaces $H$ and $K$. 
Let $B(H)$ be the set of bounded linear operators on $H$, and let $B(H)^+$ denote the set of positive semi-definite operators in $B(H)$. Let $\mathcal{T}(H)$ be the space of trace-class operators on $H$. A state in $\mathcal{T}(H)$ is a self-adjoint positive and trace-one operator on $H$. Denote by $S(H)$  the set of states in $\mathcal{T}(H)$. A state $\rho\in S(H\otimes K)$ is called \textit{separable} if it is a limit in trace-norm of states of the form
\begin{equation}\label{sep-state}
	\rho=\sum_{i=1}^{n}p_ig_i\otimes h_i,\quad \mbox{ for some } g_i\in S(H), h_i\in S(K), p_i>0, \mbox{ with } \sum_{i=1}^n p_i=1.
\end{equation}

In particular, if $\rho\in S(H\otimes K)$ can be expressed  as an infinite sum of product states:
\begin{equation}\label{count-sepa}
	\rho=\sum_{i=1}^{\infty}p_ig_i\otimes h_i,\quad \mbox{ for some } g_i\in S(H), h_i\in S(K), p_i>0,  \mbox{ with } \sum_{i=1}^\infty p_i=1,
\end{equation}
then $\rho$ is said to be \textit{countably separable}, see \cite{HouChai17,HSW05}. 
For finite-dimensional systems, a separable state always has the form as outlined in \eqref{sep-state}. 
However, in infinite-dimensional systems, a separable state may not necessarily be countably separable, i.e. it may not have the form \eqref{count-sepa}, see, e.g. \cite{HSW05}.

A quantum channel $\Phi:\mathcal{T}(H)\to\mathcal{T}(K)$ is a completely positive and trace-preserving linear map. 
Specifically, a channel $\Phi$ is called \textit{entanglement breaking} (EB in short) if, 
for any Hilbert space $R$ and any state $\rho\in\mathcal{T}(R\otimes H)$, the output state $(I_R\otimes\Phi)(\rho)$ is always separable in $S(R\otimes K)$. 
Similarly, $\Phi$ is called \textit{strongly entanglement breaking} (SEB in short) if $(I_R\otimes\Phi)(\rho)$ is always countably separable.

The following provides us with the structure of SEB channels, which we will utilize throughout this paper. It shows that a SEB channel always has a measure-and-prepare form (call a Holevo's form) \eqref{holevo-SEB} and can be represented by operator-sum of rank-one operations \eqref{rank1rep}.
\begin{Theorem}{\rm\cite[Theorem 3.1]{He13}\cite[Theorem 4]{HSR03}}
	Let $\Phi:\mathcal{T}(H)\to\mathcal{T}(K)$ be a channel. The following are equivalent.
	\begin{itemize}
		\item [(a)] $\Phi$ is a SEB channel.
		\item [(b)] $\Phi$ has the form
		\begin{equation}\label{holevo-SEB}
			\Phi(X)=\sum_{k=1}^\infty R_k\operatorname{Tr}(F_k X)
		\end{equation} 
		for some states $R_k\in S(K)$ and positive operators $F_k\in B(H)^+$ such that \linebreak $\sum_{k=1}^\infty F_k=I_H$.
		\item [(c)] $\Phi$ has an operator-sum representation by rank-one operations, i.e.
		\begin{equation}\label{rank1rep}
			\Phi(X)=\sum_{k=1}^\infty u_kv_k^*X v_ku_k^*=\sum_{k=1}^\infty (v_k^*X v_k)u_ku_k^*,
		\end{equation}
		for some unit vectors $\{u_k\}\subseteq K$ and vectors $\{v_k\}\subseteq H$ such that $\sum_{k=1}^\infty v_kv_k^*=I_H$.
	\end{itemize}
\end{Theorem}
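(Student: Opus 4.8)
The plan is to prove the cycle $(a)\Rightarrow(b)\Rightarrow(c)\Rightarrow(a)$, observing at the outset that $(c)\Rightarrow(b)$ is also immediate: given \eqref{rank1rep}, one simply sets $F_k=v_kv_k^*\in B(H)^+$ and $R_k=u_ku_k^*\in S(K)$, so that $\operatorname{Tr}(F_kX)=v_k^*Xv_k$ and $\sum_kF_k=\sum_kv_kv_k^*=I_H$, recovering \eqref{holevo-SEB}. Thus $(b)$ and $(c)$ differ only in whether the ``prepare'' states $R_k$ and the POVM elements $F_k$ are taken rank-one. For $(b)\Rightarrow(c)$ I would take spectral decompositions $F_k=\sum_i|w_{ki}\rangle\langle w_{ki}|$ (using $F_k\ge0$) and $R_k=\sum_j\lambda_{kj}|\phi_{kj}\rangle\langle\phi_{kj}|$ with unit eigenvectors $\phi_{kj}$ and $\sum_j\lambda_{kj}=1$. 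Substituting into \eqref{holevo-SEB} and expanding $\operatorname{Tr}(F_kX)=\sum_iw_{ki}^*Xw_{ki}$ gives
\begin{equation*}
\Phi(X)=\sum_{k,i,j}\lambda_{kj}\,(w_{ki}^*Xw_{ki})\,|\phi_{kj}\rangle\langle\phi_{kj}|.
\end{equation*}
Relabelling the countable index set $(k,i,j)$ and setting $u:=\phi_{kj}$ (a unit vector) and $v:=\sqrt{\lambda_{kj}}\,w_{ki}$ presents each summand as $(v^*Xv)uu^*$, while $\sum vv^*=\sum_k\bigl(\sum_j\lambda_{kj}\bigr)\bigl(\sum_iw_{ki}w_{ki}^*\bigr)=\sum_kF_k=I_H$, which is exactly \eqref{rank1rep}.

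For $(c)\Rightarrow(a)$ I would read off the Kraus form $M_k=u_kv_k^*$, so that $\Phi(X)=\sum_kM_kXM_k^*$. For any reference space $R$ and any $\rho\in S(R\otimes H)$, factor $I_R\otimes M_k=(I_R\otimes u_k)(I_R\otimes v_k^*)$ and compute
\begin{equation*}
(I_R\otimes\Phi)(\rho)=\sum_k(I_R\otimes u_k)\,\sigma_k\,(I_R\otimes u_k^*)=\sum_k\sigma_k\otimes u_ku_k^*,
\end{equation*}
where $\sigma_k:=(I_R\otimes v_k^*)\rho(I_R\otimes v_k)\ge0$ is trace-class on $R$. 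Setting $p_k:=\operatorname{Tr}\sigma_k$, the nonzero terms yield product states $(\sigma_k/p_k)\otimes u_ku_k^*$ with weights summing to $\operatorname{Tr}\rho=1$ by trace preservation, exhibiting $(I_R\otimes\Phi)(\rho)$ as a \emph{countable} convex combination of product states, i.e.\ countably separable. Hence $\Phi$ is SEB.

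The hard part will be $(a)\Rightarrow(b)$, which is where infinite-dimensionality genuinely intervenes. The finite-dimensional template is a Choi--Jamio\l kowski argument: apply $I\otimes\Phi$ to a maximally entangled state, use separability of the resulting Choi operator to write it as $\sum_k|a_k\rangle\langle a_k|\otimes|b_k\rangle\langle b_k|$, and translate this back into \eqref{holevo-SEB} with $F_k$ built from the (conjugated) $a_k$ and $R_k$ from the $b_k$, forcing $\sum_kF_k=I_H$ via trace preservation. The obstacle is that no normalizable maximally entangled vector exists on $H\otimes H$, so one cannot extract a single global Choi operator. I would instead fix an orthonormal basis $\{e_i\}$ of $H$, use the finite-rank near-maximally-entangled states $\rho_n=|\Omega_n\rangle\langle\Omega_n|$ with $|\Omega_n\rangle=n^{-1/2}\sum_{i=1}^n e_i\otimes e_i$, invoke countable separability of each $(I\otimes\Phi)(\rho_n)$, and pass to a limit while controlling trace-norm convergence so that the per-truncation product decompositions assemble into a single countable family $\{F_k,R_k\}$. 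The delicate point --- precisely the distinction between EB and SEB --- is to guarantee that the limiting decomposition remains a \emph{countable} sum of product terms rather than degenerating into a continuous integral of them; establishing this discreteness, together with $\sum_kF_k=I_H$ in the weak operator topology, is the crux and is exactly the content carried out in \cite{He13,HSR03}.
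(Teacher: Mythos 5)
The paper gives no proof of this theorem: it is imported wholesale from \cite{He13} and \cite{HSR03}, so the comparison is with the standard argument there, whose key device is echoed in this paper's own proof of Theorem \ref{Thm2.2}. Your implications $(c)\Rightarrow(b)$, $(b)\Rightarrow(c)$ and $(c)\Rightarrow(a)$ are essentially sound, modulo one fixable slip: an element $F_k\in B(H)^+$ is a bounded positive operator, generally not compact, so it need not have a ``spectral decomposition'' into eigenprojections. The decomposition you want exists anyway: fix an orthonormal basis $\{f_i\}$ and set $w_{ki}=F_k^{1/2}f_i$, so that $F_k=\sum_i w_{ki}w_{ki}^*$ in the strong operator topology; your relabelling, the rearrangement (justified by positivity of all terms on positive $X$, then linearity), and the trace-norm convergence check in $(c)\Rightarrow(a)$ then go through.

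The genuine gap is $(a)\Rightarrow(b)$, and it is not merely that you defer details to the references: the route you propose is the wrong one. Each output $(I\otimes\Phi)(\rho_n)$ for your truncated states is countably separable, but a trace-norm limit of countably separable states is in general only \emph{separable} --- that loss of countability in limits is precisely the EB/SEB distinction you yourself flag --- and you offer no mechanism to preserve discreteness through the limit, so ``assembling the per-truncation decompositions'' restates the problem rather than solving it. The actual proof in \cite{He13} (via \cite[Lemma 2.9]{LD15}) needs no limiting procedure at all: your premise that ``no normalizable maximally entangled vector exists'' overlooks that maximal \emph{rank}, not maximal entanglement, is what the argument requires. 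Choose $\lambda_i>0$ with $\sum_i\lambda_i=1$ and take the unit vector $\Omega=\sum_i\sqrt{\lambda_i}\,e_i\otimes e_i$, a bona fide state of full Schmidt rank. Applying the SEB hypothesis once to $\rho_0=\Omega\Omega^*$ gives a countable decomposition $(I\otimes\Phi)(\rho_0)=\sum_k p_k\,\rho_k\otimes R_k$ (refining each second factor to rank one via the spectral theorem, legitimate here since states \emph{are} trace-class), and the POVM is then reconstructed by the Schur-product rescaling $F_k=\bigl[p_k\langle e_i,\rho_k e_j\rangle/\sqrt{\lambda_i\lambda_j}\bigr]$, with $\sum_k F_k=I_H$ forced by trace preservation and agreement with $\Phi$ on the $e_ie_j^*$ extended by trace-norm continuity. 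This is exactly the computation carried out, for a different purpose, in the proof of Theorem \ref{Thm2.2} of this paper, which is why that theorem is phrased in terms of the weighted Choi state $\sigma=\sum_{i,j}\sqrt{\lambda_i\lambda_j}\,e_ie_j^*\otimes\Phi(e_ie_j^*)$.
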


It is shown in \cite[Lemma 2.9]{LD15} that a channel $\Phi$ is SEB if the output state $(I_H\otimes\Phi)(\rho_0)$ corresponding to the maximally entangled state $\rho_0$ in $S(H\otimes H)$ is countably separable. However, determining whether a state is separable (or countably separable) is indeed very challenging. Therefore, it is interesting to find some conditions ensuring a channel is SEB other than the separability of the output states. 

In finite-dimensional systems, if the range of a completely positive map is commutative, then it is EB, see \cite[Lemma III.1]{RJP18} and \cite[Corrollary 3]{Stormer08}. 
However, the validity of these assertions may not extend seamlessly to infinite-dimensional systems, particularly concerning SEB channels. 
Our contribution in Theorem \ref{Thm2.2} establishes that the commutativity of the range serves as a sufficient condition to guarantee the SEB nature of a channel.
Conversely, as demonstrated in Proposition \ref{Prop2.3}, we affirm that any SEB channel can be dilated to the predual map of a completely positive map with commutative range.

In a recent study focusing on the structure of null spaces, the authors in \cite{KLOPR21} constructed some private subalgebras of certain classes of EB channels for finite-dimensional systems. In particular, they showed that every self-adjoint subspace of trace-zero matrices is the null space of an EB channel. The finding presented in Theorem \ref{thm3.2} expands upon this discovery, now encompassing applications to infinite-dimensional systems.

The article is structured as follows. In section \ref{section2}, we delve into various conditions that characterize a channel as SEB. In section \ref{section3}, we considered SEB channels whose null space is a given closed self-adjoint subspace of trace-zero operators for infinite-dimensional systems. Furthermore, we explore the attributes of the fixed point set of these channels concerning their representation via rank-one operations in \eqref{rank1rep}.

\section{Some characterizations of SEB channels}\label{section2}

As mentioned by Stormer \cite{Stormer08}, there is a natural duality between bounded linear maps on $B(H)$ and linear functionals on the tensor product $B(H)\, \hat{\otimes}\, \mathcal{T}(H)$. 
In the subsequent discussion, we leverage this duality within the context of SEB channels. It is demonstrated that a channel is SEB if and only if the corresponding linear functional satisfies a specific criterion associated with separability. We first revisit some notions outlined in \cite{Stormer08}.

Let $\Psi: B(H)\to B(H)$ be a bounded linear operator. Then a map
$\widetilde{\Psi}:B(H)\, \hat{\otimes}\, \mathcal{T}(H)\to\mathbb{C}$ given by
\begin{equation}\label{linearfunctional}
\widetilde{\Psi}(Y\otimes X)=\operatorname{Tr}(\Psi(Y)X^t) \quad \mbox{ for } X\in\mathcal{T}(H), Y\in B(H),
\end{equation}
defines a linear functional on the projective tensor product space of $B(H)$ and  $\mathcal{T}(H)$, where the transpose $X^t$ defined by $\left\langle e_i|X^t|e_j\right\rangle=\left\langle e_j|X|e_i\right\rangle$ with respect to a fixed orthonormal basis $\{e_i\}$ of $H$. A linear functional $\varphi$ on $B(H)\, \hat{\otimes}\, \mathcal{T}(H)$ is called \textit{separable} (see \cite{Stormer08}) if it belongs to the norm closure of elements of the form $\sum_{k=1}^{n}w_k\otimes\rho_k$ for some positive norm-one linear functional $w_k$ on $B(H)$ and positive linear functional  $\rho_k$ on $\mathcal{T}(H)$. The separability of the corresponding linear functional $\widetilde{\Psi}$ is equivalent to the following characterization of the operator $\Psi$.

\begin{Proposition}{\rm\cite[Theorem 2]{Stormer08}}\label{storm-thm}
Let $\Psi$ be a completely positive operator and let $\widetilde{\Psi}$ be the linear functiona defined in (\ref{linearfunctional}). Then $\widetilde{\Psi}$ is a separable linear functional if and only if $\Psi$ is a limit (in bounded-weak topology) of operators of the form $x\mapsto\sum_{i=1}^{n}w_i(x)b_i$ for some positive norm-one linear functional $w_i\in B(H)^*$ and positive operators $b_i\in B(H)^+$.
\end{Proposition}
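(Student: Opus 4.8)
The plan is to realize the asserted equivalence as the two halves of a single Banach-space duality, so that separability of $\widetilde{\Psi}$ and the approximation property of $\Psi$ become the same statement read on two sides of an isometry. First I would identify the dual of the projective tensor product. Since $\mathcal{T}(H)^*=B(H)$, the standard identification $(E\hat{\otimes}F)^*\cong B(E,F^*)$ yields an isometric isomorphism between $(B(H)\hat{\otimes}\mathcal{T}(H))^*$ and the space of bounded linear maps $\Psi:B(H)\to B(H)$, under which $\Psi$ corresponds precisely to the functional $\widetilde{\Psi}$ of \eqref{linearfunctional} (the transpose $X\mapsto X^t$ being an isometric bijection of $\mathcal{T}(H)$, it affects none of the norms). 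I would then check that, under this identification, the weak* topology on the functional side matches the bounded-weak topology on the map side: testing against elementary tensors $Y\otimes X$ and letting $Z=X^t$ range over all of $\mathcal{T}(H)$, weak* convergence $\widetilde{\Psi}_\alpha\to\widetilde{\Psi}$ is the same as $\operatorname{Tr}(\Psi_\alpha(Y)Z)\to\operatorname{Tr}(\Psi(Y)Z)$ for all $Y,Z$, i.e. pointwise weak* convergence $\Psi_\alpha(Y)\to\Psi(Y)$; on norm-bounded nets this is exactly the bounded-weak topology.

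Next I would match the generators on the two sides. A direct computation shows that a measure-and-prepare map $\Psi(x)=\sum_{i=1}^{n}w_i(x)b_i$, with $w_i$ positive norm-one on $B(H)$ and $b_i\in B(H)^+$, produces $\widetilde{\Psi}=\sum_{i=1}^{n}w_i\otimes\rho_i$, where $\rho_i(X):=\operatorname{Tr}(b_iX^t)=\operatorname{Tr}(b_i^t X)$ is a positive functional on $\mathcal{T}(H)$; conversely, given an elementary separable $\sum_i w_i\otimes\rho_i$ with $\rho_i(X)=\operatorname{Tr}(c_iX)$ and $c_i\geq0$, setting $b_i=c_i^t\geq0$ recovers a measure-and-prepare map. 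Thus, under the isometry, the finite measure-and-prepare maps correspond exactly to the finite sums $\sum w_k\otimes\rho_k$ generating the separable functionals.

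With these two dictionaries in place the two implications separate by difficulty. For the direction ``$\widetilde{\Psi}$ separable $\Rightarrow$ $\Psi$ is a bounded-weak limit,'' I would simply transport a norm-approximating sequence $\sum w_k\otimes\rho_k\to\widetilde{\Psi}$ through the isometry to a norm-, hence bounded-weak-, approximating sequence of measure-and-prepare maps. The substantive direction is the converse: if $\Psi$ is a bounded-weak limit of measure-and-prepare maps, then $\widetilde{\Psi}$ is a weak* limit of elementary separable functionals, and I must upgrade this to membership in the \emph{norm}-closed separable cone $\mathcal{S}$. The main obstacle is precisely this reconciliation of closures, since for a convex cone the weak*-closure is a priori strictly larger than the norm-closure. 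I would resolve it by proving that $\mathcal{S}$ is weak*-closed: by the Krein--Smulian theorem it suffices to check that $\mathcal{S}\cap r\mathbb{B}$ is weak*-closed for each ball of radius $r$, and I would obtain this from the bipolar theorem by exhibiting $\mathcal{S}$ as the polar, inside the predual pairing, of the cone of ``witness'' tensors $z\in B(H)\hat{\otimes}\mathcal{T}(H)$ satisfying $\langle\varphi,z\rangle\geq0$ for every elementary separable $\varphi$; a polar is automatically weak*-closed. The crux of the whole argument, and where I expect to spend the most effort, is verifying that this bipolar is no larger than $\mathcal{S}$, i.e. that the Hahn--Banach separation of a non-separable functional from $\mathcal{S}$ can be implemented by an element of the predual rather than merely of the bidual; complete positivity of $\Psi$ together with the restriction to norm-bounded nets are the hypotheses I would lean on to force the separating functional into the predual.
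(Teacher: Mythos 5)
First, a remark on the benchmark: the paper does not prove this proposition at all --- it is quoted verbatim from St{\o}rmer \cite[Theorem 2]{Stormer08} --- so the comparison can only be with St{\o}rmer's original argument. Your setup is correct and standard: the identification $(B(H)\,\hat{\otimes}\,\mathcal{T}(H))^*\cong B(B(H),B(H))$ via \eqref{linearfunctional} (the transpose being a positivity-preserving isometry of $\mathcal{T}(H)$, it is harmless), the dictionary matching measure-and-prepare maps $x\mapsto\sum_{i=1}^n w_i(x)b_i$ with elementary separable functionals $\sum_i w_i\otimes\rho_i$, $\rho_i(X)=\operatorname{Tr}(b_i^t X)$, and the easy implication ``$\widetilde{\Psi}$ separable $\Rightarrow$ $\Psi$ a BW-limit'' all check out (modulo the standard caveat, which you flag, that BW-convergence matches pointwise weak* convergence only on norm-bounded nets).

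The genuine gap is in the converse, and your proposed resolution is circular. Writing $C_0$ for the cone of finite sums $\sum_i w_i\otimes\rho_i$ and $W=\{z: \langle\varphi,z\rangle\geq 0 \ \forall\,\varphi\in C_0\}$ for your witness cone, the bipolar theorem says that the polar of $W$ is exactly the weak*-closure of $C_0$; hence your remaining task, ``the bipolar is no larger than $\mathcal{S}$'' (with $\mathcal{S}$ the norm-closure of $C_0$), is word-for-word the assertion that $\mathcal{S}$ is weak*-closed --- which is the hard implication itself, and indeed a strictly stronger, completely-positivity-free version of it. Krein--Smulian reduces this to weak*-closedness of $\mathcal{S}\cap r\mathbb{B}$ but supplies no mechanism for proving that, and you supply none. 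Moreover, your plan structurally cannot use the hypothesis you say you would ``lean on'': weak*-closedness of $\mathcal{S}$ is a property of the cone alone, independent of any particular $\Psi$, whereas the theorem deliberately restricts to completely positive $\Psi$ --- a strong indication that the correct argument shows only that \emph{completely positive} elements of the weak*-closure of $C_0$ lie in $\mathcal{S}$, not that $\mathcal{S}$ is weak*-closed outright. Concretely, a bounded weak*-limit of elementary separable functionals is a barycenter-type object $\int \omega_t\otimes\rho_t\,d\mu(t)$ over the (weak*-compact, since singular $w$'s are allowed) set of normalized product functionals, and nothing in the bipolar formalism converts weak*-smallness of pieces of this integral into the norm-smallness needed to approximate by finite sums; some additional device --- e.g.\ compressions by finite-rank projections on the $\mathcal{T}(H)$ leg, where norm- and weak*-closures of the separable cone coincide and Carath\'eodory-type bounds on the number of terms become available, combined with the complete positivity of $\Psi$ --- has to carry the argument. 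As written, your proposal reduces the theorem to an unproved, and in the generality you pose it quite possibly false, claim, so it does not constitute a proof.
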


By leveraging the aforementioned duality, we proceed to establish the separability of the associated linear functional in relation to a SEB channel.
\begin{Proposition}\label{prop2.2}
A quantum channel $\Phi:\mathcal{T}(H)\to\mathcal{T}(H)$ is SEB if and only if the linear functional $\widetilde{\Phi^*}:B(H)\,\hat{\otimes}\,\mathcal{T}(H)\to\mathbb{C}$ corresponding to the dual map $\Phi^*$ can be written of the form $\widetilde{\Phi^*}=\sum_{k=1}^{\infty}w_k\otimes\rho_k$ for some weak* continuous positive norm-one linear functional $w_k$ on $B(H)$ and positive linear operators $\rho_k$ in $B(H)^+$ with $\sum_{k=1}^{\infty}\rho_k=I_H$.
\end{Proposition}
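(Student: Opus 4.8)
The plan is to exploit the Holevo (measure-and-prepare) form of SEB channels supplied by the structure theorem \cite[Theorem 3.1]{He13}, \cite[Theorem 4]{HSR03} quoted above, and to translate it, through the trace duality defining $\widetilde{\Phi^*}$, into the claimed decomposition of the functional. Both implications come from the same dictionary read in two directions; the decisive input is the identification of weak*-continuous (normal) positive functionals on $B(H)$ with trace-class states, which is exactly the feature that upgrades the ``separable functional'' of Proposition \ref{storm-thm} to a genuine countable decomposition with the normalization $\sum_k\rho_k=I_H$.

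For the forward direction, assume $\Phi$ is SEB. The structure theorem gives $\Phi(X)=\sum_{k=1}^\infty R_k\operatorname{Tr}(F_kX)$ with $R_k\in S(H)$, $F_k\in B(H)^+$ and $\sum_k F_k=I_H$. Reading off the dual map from $\operatorname{Tr}(\Phi(X)Y)=\operatorname{Tr}(X\Phi^*(Y))$ yields
\[
\Phi^*(Y)=\sum_{k=1}^\infty \operatorname{Tr}(R_kY)\,F_k.
\]
I then set $w_k(Y):=\operatorname{Tr}(R_kY)$, which is weak*-continuous because $R_k$ is trace-class, and is positive and norm-one because $R_k$ is a state, so $w_k(I_H)=\operatorname{Tr}(R_k)=1$. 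Using the transpose identity $\operatorname{Tr}(F_kX^t)=\operatorname{Tr}(F_k^tX)$, the operators $\rho_k:=F_k^t\in B(H)^+$ satisfy $\sum_k\rho_k=(\sum_k F_k)^t=I_H$, and
\[
\widetilde{\Phi^*}(Y\otimes X)=\operatorname{Tr}(\Phi^*(Y)X^t)=\sum_{k=1}^\infty w_k(Y)\operatorname{Tr}(F_kX^t)=\sum_{k=1}^\infty w_k(Y)\,\rho_k(X),
\]
which is precisely $\widetilde{\Phi^*}=\sum_k w_k\otimes\rho_k$ with all the required properties.

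For the converse I would reverse the dictionary. Given $\widetilde{\Phi^*}=\sum_k w_k\otimes\rho_k$ with $w_k$ weak*-continuous positive norm-one and $\rho_k\in B(H)^+$, $\sum_k\rho_k=I_H$, normality of $w_k$ lets me represent it by a trace-class operator $R_k$ with $w_k=\operatorname{Tr}(R_k\,\cdot\,)$; positivity and norm-one then force $R_k\in S(H)$. Setting $F_k:=\rho_k^t\in B(H)^+$ gives $\sum_k F_k=I_H$, and since the pairing $(Y,X)\mapsto\operatorname{Tr}(\Phi^*(Y)X^t)$ separates points and hence determines $\Phi^*$ and thereby $\Phi$, unwinding the displayed identity recovers $\Phi(X)=\sum_k R_k\operatorname{Tr}(F_kX)$. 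The structure theorem then certifies that $\Phi$ is SEB.

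The main obstacle I anticipate is not the algebraic dictionary but the functional-analytic bookkeeping needed to make it rigorous: I must specify the topologies in which the infinite sums converge and justify interchanging summation with the trace pairing. On the operator side this is controlled by $\sum_k F_k=I_H$ converging in the strong (equivalently bounded-weak) operator topology with $0\le F_k\le I_H$, while on the functional side the series $\sum_k w_k\otimes\rho_k$ should be read pointwise on elementary tensors and then extended by boundedness to the projective tensor product $B(H)\,\hat{\otimes}\,\mathcal{T}(H)$. The one genuinely delicate point is the equivalence ``weak*-continuity of $w_k$ $\Leftrightarrow$ $R_k$ trace-class,'' since it is exactly this normality---absent from Stormer's separability criterion in Proposition \ref{storm-thm}---that separates the strongly entanglement breaking (countably separable) case from the merely entanglement breaking one.
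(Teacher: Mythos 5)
Your proof is correct and follows essentially the same route as the paper: both directions translate the Holevo form $\Phi(X)=\sum_k R_k\operatorname{Tr}(F_kX)$ into the decomposition of $\widetilde{\Phi^*}$ via the dual map $\Phi^*(Y)=\sum_k\operatorname{Tr}(R_kY)F_k$ and the correspondence $w_k\leftrightarrow R_k$, $\rho_k\leftrightarrow F_k$, invoking the structure theorem for the converse exactly as the paper does. Your treatment is in fact slightly more explicit than the paper's at two points --- making the transpose identification $\rho_k=F_k^t$ visible, and spelling out that weak*-continuity of $w_k$ is what forces $R_k$ to be trace-class --- but these are refinements of the same argument, not a different one.
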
 
For convenience, we call a linear functional having the form as stated in the Proposition \ref{prop2.2} be {\it countably separable}.
The proof aligns with the argument presented in \cite[Theorem 2]{Stormer08} concerning SEB channels.
\begin{proof}
	 Suppose $\Phi:\mathcal{T}(H)\to\mathcal{T}(H)$ is a SEB channel.  Recall that a dual map of $\Phi$ is the linear map $\Phi^*: B(H)\to B(H)$ satisfying $\operatorname{Tr}(\Phi(X)Y)=\operatorname{Tr}(X\Phi^*(Y))$ for all $X\in\mathcal{T}(H)$ and $Y\in B(H)$. If $\Phi$ has the form \eqref{holevo-SEB} then its dual map satisfies
	\begin{equation}\label{dualmap}
		\Phi^*(Y)=\sum_{k=1}^\infty \operatorname{Tr}(R_kY)F_k,
	\end{equation}
	for some states $R_k\in\mathcal{T}(H)$ and positive operators $F_k\in B(H)^+$. For $Y\in B(H)$ and $X\in\mathcal{T}(H)$,
	$$\widetilde{\Phi^*}(Y\otimes X)=\operatorname{Tr}(\Phi^*(Y)X^t)=\sum_{k=1}^\infty \operatorname{Tr}(R_kY)\operatorname{Tr}(F_kX^t)=\sum_{k=1}^\infty w_k(Y)\rho_k(X),$$
	where 
	\begin{equation}\label{w-rho-operator}
		w_k(Y)=\operatorname{Tr}(R_kY)\quad \mbox{ and } \quad \rho_k(X)=\operatorname{Tr}(F_kX^t)
	\end{equation}
	 define a state in $B(H)^*$ and a positive linear functional on $\mathcal{T}(H)$, respectively. Then we have 
$$\sum_{k=1}^{\infty}\rho_k(X)=\operatorname{Tr}\left(\left(\sum_{k=1}^{\infty}F_k \right)X^t\right)=\operatorname{Tr}(X^t)=\operatorname{Tr}(X).$$
Thus, $\sum_{k=1}^{\infty}\rho_k=I_H$ and  $\widetilde{\Phi^*}$ is countably separable in $(B(H)\,\hat{\otimes}\,\mathcal{T}(H))^*$.
	
 Conversely, suppose $\widetilde{\Phi^*}$ is countably separable and having the form $\widetilde{\Phi^*}=\sum_{k=1}^{\infty}w_k\otimes\rho_k$ as in the proposition. 
 For each $k$, there exist states $R_k\in\mathcal{T}(H)$ and positive operators $F_k\in B(H)^+$ satisfying \eqref{w-rho-operator}. The condition $\sum_{k=1}^{\infty}\rho_k=I_H$ implies that $\sum_{k=1}^{\infty}F_k=I_H$. For each $X\in\mathcal{T}(H)$ and $Y\in B(H)$,
	$$\operatorname{Tr}\left(\Phi^*(Y)X^t\right)=\widetilde{\Phi^*}(Y\otimes X)=\sum_{k=1}^{\infty}w_k(Y)\rho_k(X)=\operatorname{Tr}\left(\sum_{k=1}^{\infty}\operatorname{Tr}(R_kY)F_kX^t\right).$$
	Then $\Phi^*$ has the form as in \eqref{dualmap}, and hence   $\Phi$ is a SEB channel.
\end{proof}

Herein, we present a sufficient condition adequate for identifying a channel as SEB, devoid of any reliance on the separability of states within the bipartite system. This condition asserts that if the range of a channel is commutative, then the channel qualifies as SEB.
\begin{Theorem}\label{Thm2.2}
Let $\{e_i\}$ be any orthonormal basis of a separable Hilbert space  $H$. Let $\Phi:\mathcal{T}(H)\to\mathcal{T}(H)$ be a completely positive and trace-preserving channel. If the range of $\Phi$ is commutative, then $\Phi$ is a SEB channel. Under this condition, for any set of scalars $\lambda_i>0$ with $\sum_{i=1}^\infty\lambda_i=1$, the weighted Choi state $\sigma$ of $\Phi$, defined by 
$$\sigma:=\sum_{i,j=1}^\infty\sqrt{\lambda_i\lambda_j}e_ie_j^*\otimes\Phi(e_ie_j^*),$$
is countably separable and can be written in the form
\begin{equation}\label{rholambda}
\sigma=\sum_{k=1}^{\infty}p_k\rho_k\otimes v_kv_k^*,
\end{equation}%\label{formChoiRho}
for some orthonormal set of vectors $\{v_k\}$ in $H$, some states $\rho_k$ in $\mathcal{T}(H)$, and some scalars $p_k>0$ with $\sum_{k=1}^\infty p_k=1$. 
Moreover, the channel $\Phi$ can be expressed as $\Phi(X)=\sum_{k=1}^{\infty}R_k\operatorname{Tr}(XF_k)$, where
$R_k=(Uv_k)(Uv_k)^*$ for some unitary operator $U$ in $B(H)$, and operators $F_k\in B(H)^+$ with a matrix representation as $$F_k=\left[\frac{p_k}{\sqrt{\lambda_i\lambda_j}}\left\langle e_i,\rho_k e_j\right\rangle\right]_{1\leq i,j<\infty}.$$
\end{Theorem}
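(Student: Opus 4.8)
The whole result rests on one structural fact, and the plan is to isolate it first: a commuting family of trace-class (hence compact) operators admits a common orthonormal eigenbasis. Since $H$ is separable, $\mathcal{T}(H)$ is norm-separable, so the range $\Phi(\mathcal{T}(H))$ has a countable norm-dense subset; because $\Phi$ is positive, hence $*$-preserving, the commutative range is a self-adjoint set, and its self-adjoint elements furnish a countable, pairwise commuting family $\{A_n\}$ of self-adjoint compact operators whose span is dense in the range. I would invoke the simultaneous spectral theorem for countably many commuting compact self-adjoint operators to produce an orthonormal basis $\{v_k\}$ of $H$ of common eigenvectors, and then pass to the whole range by density. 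The step I expect to be the main obstacle is precisely this simultaneous diagonalization: it is where compactness of trace-class operators is indispensable and where the argument departs from the finite-dimensional setting. Concretely one must argue that the abelian von Neumann algebra generated by $\{A_n\}$ is atomic with one-dimensional atoms, so that the joint eigenprojections are rank-one; this is exactly the property that fails for a general commutative range (for instance a continuous multiplication algebra) and is restored here by compactness.

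Granting the common eigenbasis, each $\Phi(X)$ is diagonal, so $\Phi(X)=\sum_k\langle v_k,\Phi(X)v_k\rangle\,v_kv_k^*$. For fixed $k$ the map $X\mapsto\langle v_k,\Phi(X)v_k\rangle$ is a positive linear functional on $\mathcal{T}(H)$, hence of the form $\operatorname{Tr}(F_kX)$ for a unique $F_k\in B(H)^+$ under the duality $\mathcal{T}(H)^*\cong B(H)$; trace preservation gives $\operatorname{Tr}\big((\sum_kF_k)X\big)=\operatorname{Tr}(X)$ for all $X$, i.e. $\sum_kF_k=I_H$. This is exactly the Holevo form \eqref{holevo-SEB} with pure output states $R_k=v_kv_k^*$, so the cited theorem immediately yields that $\Phi$ is SEB.

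For the separability of $\sigma$ I would substitute $\Phi(e_ie_j^*)=\sum_k\langle e_j,F_ke_i\rangle\,v_kv_k^*$ into the definition and collect the sum over $k$:
\begin{equation*}
\sigma=\sum_k\Big(\sum_{i,j=1}^\infty\sqrt{\lambda_i\lambda_j}\,\langle e_j,F_ke_i\rangle\,e_ie_j^*\Big)\otimes v_kv_k^*=:\sum_{k=1}^\infty\tilde\rho_k\otimes v_kv_k^*.
\end{equation*}
A direct matrix computation identifies $\tilde\rho_k=D^{1/2}F_k^{\,t}D^{1/2}\ge0$, where $D=\operatorname{diag}(\lambda_i)$ in the basis $\{e_i\}$ and the transpose is the one fixed in \eqref{linearfunctional}; putting $p_k=\operatorname{Tr}(\tilde\rho_k)$ and $\rho_k=p_k^{-1}\tilde\rho_k$ then produces states $\rho_k$ and scalars $p_k>0$ with $\sum_kp_k=\operatorname{Tr}\big(D\sum_kF_k\big)=\operatorname{Tr}(D)=1$. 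This is the asserted form \eqref{rholambda}, establishing countable separability, with $\{v_k\}$ orthonormal by construction.

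It remains to recover $\Phi$ from $\{p_k,\rho_k,v_k\}$. I would invert the weighted Choi correspondence via the identity $\Phi(X)=\operatorname{Tr}_1\big((D^{-1/2}X^tD^{-1/2}\otimes I)\sigma\big)$, checked first on the matrix units $e_ie_j^*$; inserting \eqref{rholambda} and using $\operatorname{Tr}(X^tB)=\operatorname{Tr}(XB^t)$ returns the Holevo form $\Phi(X)=\sum_kR_k\operatorname{Tr}(XF_k)$, whose operators agree with the entries $\tfrac{p_k}{\sqrt{\lambda_i\lambda_j}}\langle e_i,\rho_ke_j\rangle$ displayed in the statement once the transpose convention of \eqref{linearfunctional} is taken into account. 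The output states are the rank-one projections built from the common eigenbasis; since that eigenbasis is only determined up to a unitary rotation within degenerate eigenspaces, one records the freedom by writing $R_k=(Uv_k)(Uv_k)^*$ for a unitary $U\in B(H)$, with $U=I_H$ whenever the $\rho_k$ are pairwise distinct. Throughout, all rearrangements of the doubly-indexed series are justified by trace-norm convergence, which follows from positivity of $\sigma$ together with $\sum_kp_k=1$.
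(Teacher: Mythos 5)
Your proposal is correct and takes essentially the same route as the paper: both proofs hinge on simultaneously diagonalizing the commuting, normal, compact operators in the range (the paper's unitary $U$ plays the role of your common eigenbasis $\{v_k\}$) and then reading off the Holevo form and the Choi decomposition, with your direct definition of $F_k$ via the positive functionals $X\mapsto\left\langle v_k,\Phi(X)v_k\right\rangle$ and the duality $\mathcal{T}(H)^*\cong B(H)$ being only a mild streamlining of the paper's Schur-product construction borrowed from \cite{LD15}. One aside should be corrected: compactness does not force the joint atoms to be one-dimensional (the joint kernel, for instance, can be infinite-dimensional); it yields atomicity of the joint spectral decomposition, and the rank-one projections $v_kv_k^*$ arise by choosing an orthonormal basis inside each atom, on which every element of the range acts as a scalar --- which is exactly what your subsequent argument actually uses, so the proof stands once that justification is rephrased.
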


\begin{proof}
	We first note that if the range of a channel $\Phi:\mathcal{T}(H)\to\mathcal{T}(H)$ is commutative, then this range will consist of normal operators. Indeed, since $\Phi$ is positive, it maps self-adjoint operators to self-adjoint operators. Then for any operator $X$ in $\mathcal{T}(H)$ of the form $X=X_1+iX_2$ for some self-adjoint operators $X_1$ and $X_2$, we have $\Phi(X)^*=(\Phi(X_1)+i\Phi(X_2))^*=\Phi(X_1)-i\Phi(X_2)=\Phi(X^*)$. Thus, $\Phi$ preserves the involution. Since $\mathcal{T}(H)$ is a self-adjoint space, for each $X\in\mathcal{T}(H)$ we have $\Phi(X)\Phi(X)^*=\Phi(X)\Phi(X^*)=\Phi(X^*)\Phi(X)=\Phi(X)^*\Phi(X)$. Then $\Phi(X)$ is a normal operator for all $X\in\mathcal{T}(H)$.
	
	We now employ some techniques introduced in \cite{LD15} and \cite{GuoHou12}.
	From the above paragraph, $\Phi(e_ie_j^*), i,j=1,\ldots,\infty$, are mutually commuting, normal and trace-class (hence compact) operators. So they are simultaneously diagonalizable by some unitary operator $U$ in $B(H)$. Then
	\begin{equation}\label{eqChoimatrix}
		(I_H\otimes U^*)\sigma(I_H\otimes U)=\sum_{i,j=1}^{\infty}e_ie_j^*\otimes D_{ij},
	\end{equation}
	for some diagonal operators $D_{ij}$ in $B(H)$.
	Replacing $\Phi(\cdot)$ by $U\Phi(\cdot)U^*$, we can assume $\sigma$ has the form as in the right side of \eqref{eqChoimatrix}.
	Hence $\sigma$ can be written as 
	$$\sigma=\sum_kB_k\otimes P_k,$$
	for some rank-one orthogonal projections $P_k$ and positive operators $B_k$. On the other hand, since $\Phi$ is completely positive, the weighted Choi operator $\sigma$ defined as in \eqref{rholambda} is positive by \cite[Theorem 1.4]{LD15}. It is easy to check that $\sigma$ has trace-one, hence it is a state in $B(H\otimes H)$. Thus $\sigma$ has the form as \eqref{rholambda} for some orthonormal set of vectors $\{v_k\}$ in $H$, some states $\{\rho_k\}$ in $\mathcal{T}(H)$ and some scalars $p_k\geq0$ with $\sum_{k=1}^\infty p_k=1$.
	
	For each state $X=\sum_{i,j}e_{ij}\otimes X_{ij}\in S(H\otimes H)$, define the partial trace of $X$ on the second component by $\operatorname{Tr}_2(X)=\sum_{i,j}e_{ij}\operatorname{Tr}(X_{ij})$. Since $\Phi$ is trace preserving, taking the partial trace over the second component of \eqref{rholambda}, we arrive at
	\begin{equation}\label{eq1111}
		\sum_{i=1}^\infty p_i\rho_i=\sum_{i,j}\lambda_ie_ie_j^*\operatorname{Tr}(\Phi(e_ie_j^*))=\sum_{i=1}^\infty\lambda_ie_ie_i^*.
	\end{equation}
	From \eqref{rholambda}, for $x,y\in H$,
	$$\sqrt{\lambda_i\lambda_j}\left\langle\Phi(e_ie_j^*)x,y\right\rangle=\left\langle \sigma(e_i\otimes x),(e_j\otimes y)\right\rangle=\sum_{k=1}^{\infty}\left\langle e_i,p_k\rho_ke_j\right\rangle\left\langle v_kv_k^*x,y\right\rangle.$$
	Then
	\begin{equation}\label{eq55}
		\Phi(e_ie_j^*)=\sum_{k=1}^{\infty}\frac{p_k}{\sqrt{\lambda_i\lambda_j}}\left\langle e_i,\rho_ke_j\right\rangle v_kv_k^*.
	\end{equation} 
		Following the proof of \cite[Lemma 2.9]{LD15}, we let $F_k$ be the Schur product as
	\begin{equation*}
		F_k=	p_k\begin{pmatrix}
			\frac{1}{\lambda_1} & \sqrt{\frac{1}{\lambda_1\lambda_2}}& \vdots \\
			\sqrt{\frac{1}{\lambda_2\lambda_1}} &  	\frac{1}{\lambda_2}& \vdots \\
			\cdots & \cdots & \ddots
		\end{pmatrix}\circ
		\begin{pmatrix}
			(\rho_k)_{11} & (\rho_k)_{12}& \vdots \\
			(\rho_k)_{21} &  (\rho_k)_{22}& \vdots \\
			\cdots & \cdots & \ddots
		\end{pmatrix}
		=\sum_{i,j=1}^{\infty}\frac{p_k}{\sqrt{\lambda_i\lambda_j}}(\rho_k)_{ij}e_ie_j^*,
	\end{equation*}
	where $(\rho_k)_{ij}=\left\langle e_i,\rho_ke_j\right\rangle$.
	We have $F_k\geq 0$ and $\sum_{k=1}^\infty F_k=I_H$ by \eqref{eq1111}. The map $\Psi(X)=\sum_{k=1}^{\infty}\operatorname{Tr}(XF_k)v_kv_k^*$ is well-defined and completely positive on $\mathcal{T}(H)$, see \cite[lemma 2.5]{LD15}. Equation \eqref{eq55} ensures that $\Psi(e_ie_j^*)=\Phi(e_ie_j^*)$ for all $i,j$. Hence $\Phi=\Psi$ on $\mathcal{T}(H)$ from the continuity in trace norm of these maps, see the proof of \cite[Lemma 2.9]{LD15} for details. Thus, $\Phi$ has the desired form.
\end{proof}

\begin{Remark}
Theorem \ref{Thm2.2} shares a connection with a result by Stormer \cite{Stormer08}. 
Specifically, Corollary 3 and Theorem 2 (along with Proposition \ref{storm-thm}) in \cite{Stormer08} imply that a completely positive operator with commutative range serves as a limit of \textit{entanglement breaking maps}. These maps bear similarities to the dual maps in \eqref{holevo-SEB}, albeit with a finite count of summands. In Theorem \ref{Thm2.2}, we demonstrate that a completely positive and trace-preserving channel with commutative range embodies a form characterized by an infinite summation akin to \eqref{holevo-SEB}.
\end{Remark}

The subsequent demonstration proves an anticipation in \cite{BaHo24}. It establishes that any strongly entanglement breaking channel can be dilated to the predual of a positive map with commutative range. Notably, the predual of an operator $\Psi: B(K) \to B(E)$ is represented by the operator $\Psi_*: \mathcal{T}(E) \to \mathcal{T}(K)$, which satisfies the following relationship:
$$\operatorname{Tr}\left(\Psi(Y)X\right)=\operatorname{Tr}\left(Y\Psi_*(X)\right),\quad \forall X\in\mathcal{T}(E), Y\in B(K).$$
\begin{Proposition}\label{Prop2.3}
If a channel $\Phi: \mathcal{T}(H)\to \mathcal{T}(K)$  is SEB, then there exist a Hilbert space $E$, an isometry $U: H\to E$, and a positive linear map $\Psi:B(K)\to B(E)$ with $\operatorname{range}(\Psi)$ being commutative such that $\Phi(X)=\Psi_*(UXU^*)$ for all $X\in\mathcal{T}(H)$.
\end{Proposition}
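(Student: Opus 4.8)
The plan is to start from the rank-one operator-sum representation of $\Phi$, which supplies exactly the data needed to build the dilation. Since $\Phi$ is SEB, part (c) of the structure theorem for SEB channels stated in the Introduction lets me write $\Phi(X)=\sum_{k=1}^\infty (v_k^* X v_k)\,u_k u_k^*$ for some unit vectors $\{u_k\}\subseteq K$ and vectors $\{v_k\}\subseteq H$ with $\sum_{k=1}^\infty v_k v_k^*=I_H$. The guiding observation is that the normalization $\sum_k v_k v_k^*=I_H$ is precisely the condition that turns the $v_k$ into the ``columns'' of an isometry, while the unit vectors $u_k$ can be used to manufacture a map whose range consists of diagonal, hence mutually commuting, operators. (One could instead begin from the Holevo measure-and-prepare form, but there the positive operators $F_k$ need not be rank one and would not assemble into an isometry, so the rank-one representation is what makes the construction clean.)

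Concretely, I would take $E=\ell^2(\mathbb{N})$ with orthonormal basis $\{g_k\}$ and define $U:H\to E$ by $Uh=\sum_{k}\langle v_k,h\rangle\,g_k$. Then $\|Uh\|^2=\sum_k|\langle v_k,h\rangle|^2=\langle h,(\sum_k v_k v_k^*)h\rangle=\|h\|^2$, so $U$ is an isometry whose adjoint is determined by $U^* g_k=v_k$. Next I would define $\Psi:B(K)\to B(E)$ by $\Psi(Y)=\sum_k \langle u_k,Y u_k\rangle\,g_k g_k^*$. Since each $u_k$ is a unit vector, the diagonal entries satisfy $|\langle u_k,Y u_k\rangle|\le\|Y\|$, so $\Psi(Y)$ is a bounded diagonal operator; positivity of $Y$ forces nonnegative diagonal entries, whence $\Psi$ is positive, and its range lies in the commutative algebra of operators diagonal with respect to $\{g_k\}$, giving the required commutativity.

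It then remains to identify the predual $\Psi_*$ and verify the dilation identity. For $X\in\mathcal{T}(E)$ one computes $\operatorname{Tr}(\Psi(Y)X)=\sum_k\langle u_k,Y u_k\rangle\langle g_k,X g_k\rangle=\operatorname{Tr}\big(Y\sum_k\langle g_k,X g_k\rangle u_k u_k^*\big)$, which shows $\Psi$ is weak* continuous and pins down $\Psi_*(X)=\sum_k\langle g_k,X g_k\rangle u_k u_k^*$, the series converging in trace norm because $\sum_k|\langle g_k,X g_k\rangle|\le\|X\|_1$. Finally, using $U^* g_k=v_k$, I obtain $\Psi_*(UXU^*)=\sum_k\langle g_k,UXU^* g_k\rangle u_k u_k^*=\sum_k (v_k^* X v_k)\,u_k u_k^*=\Phi(X)$, as desired. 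This is a construction-and-verification argument rather than one with a deep obstacle; the only genuinely delicate point is confirming that $\Psi$ is normal so that $\Psi_*$ exists and actually takes values in $\mathcal{T}(K)$, which is settled by the trace-norm estimate above (itself relying on $\{g_k\}$ being an orthonormal \emph{basis}). Conceptually, the entire proof hinges on the single fact that the rank-one representation packages the SEB property into the isometry relation $\sum_k v_k v_k^*=I_H$.
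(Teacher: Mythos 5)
Your proof is correct, but it takes a more concrete route than the paper. The paper works with the dual map $\Phi^*$ in its Holevo form $\Phi^*(Y)=\sum_k \operatorname{Tr}(R_kY)F_k$, factors it as $\eta\circ\gamma$ through the commutative $C^*$-algebra $\ell^\infty$ (with $\gamma(Y)=\{\operatorname{Tr}(YR_k)\}_k$ and $\eta(\{a_k\})=\sum_k a_kF_k$), invokes the fact that positive maps out of a commutative $C^*$-algebra are completely positive, and then applies the abstract Stinespring dilation theorem to $\eta$ to produce $E$, the isometry $U$, and $\Psi=\pi\circ\gamma$. You instead start from the rank-one Kraus representation $\Phi(X)=\sum_k (v_k^*Xv_k)u_ku_k^*$ and build everything by hand: $E=\ell^2(\mathbb{N})$, $U$ the isometry encoded by $\sum_k v_kv_k^*=I_H$, and $\Psi(Y)=\sum_k\langle u_k,Yu_k\rangle g_kg_k^*$ diagonal-valued. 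In effect you have written down the explicit Stinespring dilation for the special case $F_k=v_kv_k^*$, $R_k=u_ku_k^*$, which the SEB structure theorem guarantees is available. What your approach buys: a separable, concrete dilation space; a $\Psi$ that is visibly unital, completely positive, and, most importantly, \emph{normal}, with an explicit predual $\Psi_*(X)=\sum_k\langle g_k,Xg_k\rangle u_ku_k^*$ and a clean trace-norm bound $\sum_k|\langle g_k,Xg_k\rangle|\le\|X\|_1$ --- this normality is exactly the point the paper handles only briefly (it asserts weak*-weak* continuity of $\pi\circ\gamma$), so your argument is actually more airtight there. What the paper's approach buys: it does not depend on choosing a particular Kraus representation, it parallels the finite-dimensional argument of the cited reference directly, and it isolates the structural mechanism (factorization through a commutative $C^*$-algebra) that makes the dilation exist. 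One small quibble: your parenthetical claim that the Holevo form ``would not assemble into an isometry'' is slightly misleading --- the paper does start from the Holevo form and gets the isometry via Stinespring; the rank-one form merely lets you skip that abstract step.
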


\begin{proof}
	We follow the arguments in \cite{BaHo24} which was for finite-dimensional systems. Let $\Phi^*: B(K)\to B(H)$ be a dual of $\Phi$ with a form
	\begin{equation*}
		\Phi^*(Y)=\sum_{k=1}^\infty \operatorname{Tr}(R_kY)F_k,
	\end{equation*}
	for some states $R_k\in\mathcal{T}(K)$ and positive operators $F_k\in B(H)^+$ satisfying $\sum_{k=1}^\infty F_k=I_H$. Let $l^\infty$ be the space of bounded sequences. Then $l^\infty$ is an unital commutative $C^*$-algebra with pointwise sums and products,  which can be identified with $C(\beta\mathbb{N})$, the space of continuous functions on the Stone-Cech compactification of $\mathbb{N}$.
	
	We write $\Phi^*$ into the composition $\Phi^*=\eta\circ\gamma$ of unital maps $\gamma: B(K)\to l^\infty$ by $\gamma(Y)=\{\operatorname{Tr}(YR_k)\}_k$, and $\eta: l^\infty\to B(H)$ by $\eta(\{a_i\}_i)=\sum_{i=1}^{\infty}a_iF_i$. The positive linear map $\eta$ is well-defined by conditions on $\{F_i\}_i$. Moreover, it is completely positive since $l^\infty$ is a commutative $C^*$-algebra, see \cite[Theorem 3.11]{Paul02}. By the Stinespring dilation theorem, there exist a Hilbert space $E$, an isometry $U: H\to E$ and a $\ast$-homomorphism $\pi: l^\infty\to B(E)$ such that $\eta(X)=U^*\pi(X)U$. Then for each $Y \in B(K)$, we have $$\Phi^*(Y)=\eta\circ\gamma(Y)=U^*(\pi\circ\gamma)(Y)U=U^*\Psi(Y)U,$$ where $\Psi=\pi\circ\gamma: B(K)\to B(E)$ is a weak*-weak* continuous positive mapping with commutative range since $\gamma$ is so. For each $Y\in B(K)$ and $X\in\mathcal{T}(H)$,
	$$\operatorname{Tr}(Y\Phi(X))=\operatorname{Tr}(\Phi^*(Y)X)=\operatorname{Tr}(U^*\Psi(Y)UX)=\operatorname{Tr}(\Psi(Y)UXU^*)=\operatorname{Tr}(Y\Psi_*(UXU^*).$$
	Hence, $\Phi(X)=\Psi_*(UXU^*)$ as asserted.
\end{proof}

%%%%%%%%%%%%%%%%%%

\section{Null space, fixed point and multiplicative domain}\label{section3}

In \cite{KLOPR21}, the authors construct an entanglement breaking channel that vanishes on a given subspace of trace-zero matrices as follows.
\begin{Proposition}{\rm\cite[Proposition 3.1]{KLOPR21}}\label{prop1}
	Let $H$ be a finite-dimensional Hilbert space. Let $\mathcal{N}$ be a self-adjoint subspace of trace-zero operators in $B(H)$. Then there is an entanglement breaking channel $\Phi$ on $B(H)$ such that $\mathcal{N}=\{X\in B(H):\Phi(X)=0\}$. 
\end{Proposition}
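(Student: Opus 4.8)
The plan is to realize $\mathcal{N}$ as the kernel of a measure-and-prepare channel
$$\Phi(X)=\sum_{k=1}^{m}R_k\operatorname{Tr}(F_kX),$$
which is exactly of the Holevo form \eqref{holevo-SEB} with finitely many terms and is therefore entanglement breaking in finite dimensions. The two things I would engineer are: positive operators $F_k$ whose joint kernel under the trace pairing is precisely $\mathcal{N}$, together with $\sum_k F_k=I$; and states $R_k$ chosen \emph{linearly independent}, so that no cancellation inside the sum can enlarge the kernel.

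First I would pass to the annihilator. Set
$$\mathcal{M}=\{Y\in B(H):\operatorname{Tr}(YX)=0 \text{ for all } X\in\mathcal{N}\}$$
for the non-degenerate pairing $(Y,X)\mapsto\operatorname{Tr}(YX)$. Two hypotheses feed the construction. Since every $X\in\mathcal{N}$ is trace-zero, $\operatorname{Tr}(IX)=\operatorname{Tr}(X)=0$, so $I\in\mathcal{M}$; and since $\mathcal{N}$ is self-adjoint, the identity $\operatorname{Tr}(Y^*X)=\overline{\operatorname{Tr}(X^*Y)}$ shows that $\mathcal{M}$ is self-adjoint as well. By the double-annihilator identity in finite dimensions, $\{X:\operatorname{Tr}(YX)=0 \text{ for all } Y\in\mathcal{M}\}=\mathcal{N}$, so it suffices to produce positive $F_k$ whose real span equals the self-adjoint part $\mathcal{M}_{\mathrm{sa}}$, subject to $\sum_k F_k=I$.

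The technical heart is that construction. I would start from a Hermitian basis $I=A_1,A_2,\ldots,A_m$ of $\mathcal{M}_{\mathrm{sa}}$, where $m=\dim\mathcal{M}$, shift each $A_j$ with $j\geq2$ to a positive operator $G_j=A_j+c_jI\in\mathcal{M}_{\mathrm{sa}}$ for $c_j$ large, and then rescale: for sufficiently small weights $t_j>0$ put $F_j=t_jG_j$ for $j\geq2$ and $F_1=I-\sum_{j\geq2}t_jG_j$. For small $t_j$ the operator $F_1$ is a perturbation of $I$, hence positive; all $F_j$ lie in $\mathcal{M}_{\mathrm{sa}}$; they sum to $I$ by construction; and a dimension count (using $I\notin\operatorname{span}\{G_2,\ldots,G_m\}$) shows they span $\mathcal{M}_{\mathrm{sa}}$, so $\operatorname{span}_{\mathbb{C}}\{F_k\}=\mathcal{M}$. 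Since $m\leq\dim B(H)=(\dim H)^2$ and the states span $B(H)$, I can then pick linearly independent states $R_1,\ldots,R_m\in S(H)$. The resulting $\Phi$ is completely positive, is trace preserving because $\operatorname{Tr}(R_k)=1$ and $\sum_kF_k=I$, is entanglement breaking by its Holevo form, and satisfies $\Phi(X)=0$ if and only if $\operatorname{Tr}(F_kX)=0$ for every $k$ (by linear independence of the $R_k$), which by the annihilator identity is exactly $X\in\mathcal{N}$.

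The hard part will be reconciling the three demands on the $F_k$ simultaneously: positivity, summation to the identity, and spanning all of $\mathcal{M}$. Achieving any one or two of these is immediate, but getting all three forces the perturbative rescaling above, whose validity rests squarely on the fact that $I\in\mathcal{M}_{\mathrm{sa}}$, so that the slack operator $F_1=I-\sum_{j\geq2}t_jG_j$ remains inside $\mathcal{M}_{\mathrm{sa}}$ and positive. The remaining ingredients — self-adjointness of $\mathcal{M}$, the double-annihilator identity, existence of enough linearly independent states, and complete positivity and trace preservation of the measure-and-prepare form — are routine.
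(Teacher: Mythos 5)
Your proposal is correct and follows essentially the same blueprint as the paper: pass to the annihilator of $\mathcal{N}$ (noting $I\in\mathcal{N}^\perp$ and its self-adjointness), construct positive operators $F_k$ spanning it with $\sum_k F_k=I$ via a small perturbation of the identity (your slack operator $F_1=I-\sum_{j\geq2}t_jG_j$), pair them with linearly independent states in a Holevo-form channel, and identify the null space through the double annihilator. Note that the paper itself only cites \cite{KLOPR21} for this finite-dimensional statement, but its own proof of the infinite-dimensional analogue (Theorem \ref{thm3.2}) is exactly your strategy, with your shift-and-rescale of a Hermitian basis replaced by a countable weak*-dense family in the ball $\{Y\in\mathcal{N}^\perp:\|Y-I_H\|\leq\frac{1}{2}\}$ with geometric weights $2^{-k}$, and your linearly independent states taken concretely as $R_k=e_ke_k^*$.
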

We demonstrate the validity of this assertion in the context of infinite-dimensional systems. Here, $\mathcal{T}(H)_0$ represents the subspace of trace-zero operators within $\mathcal{T}(H)$, and the null space of a channel $\Phi$ on $\mathcal{T}(H)$ is denoted by $\operatorname{null}(\Phi): = \{X \in \mathcal{T}(H) : \Phi(X) = 0\}$. Given that every channel operating on $\mathcal{T}(H)$ is continuous in the trace-norm topology, see \cite[Lemma 2.3]{LD15}, its null space must be closed within this topology.
\begin{Theorem}\label{thm3.2}
Let $H$ be a separable infinite-dimensional Hilbert space. Let $\mathcal{N}\subseteq\mathcal{T}(H)_0$ be a self-adjoint and closed (in trace-norm) subspace of trace-zero operators on $H$. Then there is a SEB channel $\Phi$ on $\mathcal{T}(H)$ such that $\operatorname{null}(\Phi)=\mathcal{N}$.
\end{Theorem}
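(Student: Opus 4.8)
The plan is to build $\Phi$ directly in the Holevo form \eqref{holevo-SEB}, choosing the output states $R_k$ to be mutually orthogonal rank-one projections so that the null space is governed entirely by the effects $F_k$. Concretely, if $\{g_k\}$ is an orthonormal sequence in $H$ and $R_k=g_kg_k^*$, then $\Phi(X)=\sum_k g_kg_k^*\operatorname{Tr}(F_kX)$ is diagonal in $\{g_k\}$ with $k$-th entry $\operatorname{Tr}(F_kX)$, so $\Phi(X)=0$ if and only if $\operatorname{Tr}(F_kX)=0$ for every $k$. Thus $\operatorname{null}(\Phi)$ equals the pre-annihilator in $\mathcal T(H)$ of the family $\{F_k\}$. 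Since such a $\Phi$ is automatically SEB by the characterization \eqref{holevo-SEB} (\cite{He13,HSR03}) once $F_k\in B(H)^+$ and $\sum_kF_k=I_H$, the whole problem reduces to producing a countable (possibly finite) family of positive operators $\{F_k\}\subseteq B(H)^+$ with $\sum_kF_k=I_H$ whose pre-annihilator is exactly $\mathcal N$.

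I would phrase the target via the annihilator $\mathcal N^\perp=\{A\in B(H):\operatorname{Tr}(AX)=0 \text{ for all } X\in\mathcal N\}$. Two duality facts drive the construction: first, $\mathcal N^\perp$ is a weak$^*$-closed subspace of $B(H)=\mathcal T(H)^*$, and since $\mathcal N$ is norm-closed the bipolar theorem gives $\mathcal N={}_\perp(\mathcal N^\perp)$, the pre-annihilator of $\mathcal N^\perp$; second, because every element of $\mathcal N$ has trace zero we have $I_H\in\mathcal N^\perp$, and because $\mathcal N$ is self-adjoint so is $\mathcal N^\perp$. The pre-annihilator of $\{F_k\}$ equals the pre-annihilator of its weak$^*$-closed linear span, so it suffices to arrange all $F_k\in\mathcal N^\perp$ (giving $\operatorname{null}(\Phi)\supseteq\mathcal N$) together with $\overline{\operatorname{span}}^{\,w^*}\{F_k\}=\mathcal N^\perp$ (giving the reverse inclusion through $\mathcal N={}_\perp(\mathcal N^\perp)$).

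The positive family is extracted as follows. As $H$ is separable, $\mathcal T(H)$ is norm-separable, hence so is the quotient $\mathcal T(H)/\mathcal N$; identifying $\mathcal N^\perp\cong(\mathcal T(H)/\mathcal N)^*$ shows the unit ball of $\mathcal N^\perp$ is weak$^*$-metrizable and weak$^*$-compact, so $\mathcal N^\perp$ is weak$^*$-separable. Fix a countable weak$^*$-dense set $\{A_n\}\subseteq\mathcal N^\perp$. Splitting each $A_n$ into its self-adjoint real and imaginary parts (which lie in the self-adjoint space $\mathcal N^\perp$) and writing a self-adjoint $B\in\mathcal N^\perp$ as $B=\|B\|I_H-(\|B\|I_H-B)$, where both summands are positive and, thanks to $I_H\in\mathcal N^\perp$, again lie in $\mathcal N^\perp$, I obtain a countable family $\{G_m\}$ of positive operators in $\mathcal N^\perp$ whose linear span contains every $A_n$; hence $\overline{\operatorname{span}}^{\,w^*}\{G_m\}=\mathcal N^\perp$. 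To normalize, choose $\epsilon_m=2^{-(m+1)}/(\|G_m\|+1)$ and set $S=\sum_m\epsilon_mG_m$, a norm-convergent sum with $0\le S\le\tfrac12 I_H$ and $S\in\mathcal N^\perp$. Then $F_0:=I_H-S\ge\tfrac12 I_H$ lies in $\mathcal N^\perp$, the operators $F_m:=\epsilon_mG_m$ ($m\ge1$) are positive and lie in $\mathcal N^\perp$, and $\sum_{k\ge0}F_k=I_H$ while the weak$^*$-closed span is unchanged, equal to $\mathcal N^\perp$.

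Finally, indexing $\{F_k\}_{k\ge0}$ and an orthonormal sequence $\{g_k\}_{k\ge0}\subseteq H$ by the same countable set, I define $\Phi(X)=\sum_kg_kg_k^*\operatorname{Tr}(F_kX)$. By construction $F_k\in B(H)^+$ and $\sum_kF_k=I_H$, so $\Phi$ is a SEB channel by \eqref{holevo-SEB}, and the first paragraph identifies $\operatorname{null}(\Phi)={}_\perp\{F_k\}={}_\perp\big(\overline{\operatorname{span}}^{\,w^*}\{F_k\}\big)={}_\perp(\mathcal N^\perp)=\mathcal N$. The step I expect to be the genuine obstacle is the reverse inclusion $\operatorname{null}(\Phi)\subseteq\mathcal N$: it rests on exhibiting a countable positive family whose weak$^*$-closed span recovers the entire dual object $\mathcal N^\perp$, which is where separability of $\mathcal T(H)$, the bipolar identity $\mathcal N={}_\perp(\mathcal N^\perp)$, and, crucially, the fact that the positive decomposition of self-adjoint elements can be carried out inside $\mathcal N^\perp$ (using $I_H\in\mathcal N^\perp$, i.e.\ the trace-zero hypothesis) all enter. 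The forward inclusion and the channel and SEB properties are then routine.
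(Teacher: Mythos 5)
Your proposal is correct and follows essentially the same route as the paper: both build the channel in Holevo form with mutually orthogonal rank-one outputs $R_k$, reduce the problem to finding a countable family $\{F_k\}\subseteq\mathcal{N}^\perp\cap B(H)^+$ with $\sum_k F_k=I_H$ and weak$^*$-dense span in $\mathcal{N}^\perp$, and close with the pre-annihilator identity $\mathcal{N}={}_\perp(\mathcal{N}^\perp)$ via trace-norm closedness. The only divergence is mechanical: the paper (following Yashin) extracts its positive spanning family from a countable weak$^*$-dense subset of the ball $\{Y\in\mathcal{N}^\perp:\|Y-I_H\|\leq\tfrac12\}$, whose elements are automatically positive, whereas you take an arbitrary countable weak$^*$-dense set and split it into positives by hand using $I_H\in\mathcal{N}^\perp$ --- an equivalent (and if anything slightly more careful) variant of the same step.
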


\begin{proof}
Let $\mathcal{N}^\perp$ be a subspace of $B(H)$ defined by
$$\mathcal{N}^\perp=\{Y\in B(H): \operatorname{Tr}(XY)=0 \quad \hbox{for all } X\in\mathcal{N}\}.$$
The identity operator $I_H$ is in $\mathcal{N}^\perp$ since $\mathcal{N}\subseteq \mathcal{T}(H)_0$. Let $\{e_k\}_k$ be an orthonormal basis of $H$. For each $Y\in\mathcal{N}^\perp$,
$$0=\operatorname{Tr}(YX)=\sum_{i=1}^{\infty}\left\langle e_k,YXe_k\right\rangle=\sum_{k=1}^{\infty}\left\langle X^*Y^*e_k,e_k\right\rangle=\operatorname{Tr}(X^*Y^*) \quad \hbox{for all } X\in\mathcal{N}.$$
Since $\mathcal{N}$ is self-adjoint, i.e. $\mathcal{N}=\{X^*: X\in\mathcal{N}\}$, we have $Y^*\in\mathcal{N}^\perp$. Thus $\mathcal{N}^\perp$ is a self-adjoint subspace of $B(H)$. We follow arguments in \cite[Lemma 2]{Yashin20} to define a countable subset whose linear span is dense (in weak* topology) in $\mathcal{N}^\perp$.

Recall the duality between $(B(H),\|\cdot\|)$ and $(\mathcal{T}(H),\|\cdot\|_1)$ is defined by $\left\langle X,Y\right\rangle=\operatorname{Tr}(XY)$ for all $X\in\mathcal{T}(H)$ and $Y\in B(H)$. The weak* topology on $B(H)$ is the topology defined by the family of semi-norms $\{q_X: X\in\mathcal{T}(H)\}$, where $q_X(Y)=|\operatorname{Tr}(XY)|$ for all $Y\in B(H)$. In addition, since the Hilbert space $H$ is separable, the trace-class $\mathcal{T}(H)$ is a separable Banach space with respect to the trace-norm. This implies that the set
$$B:=\left\{Y\in \mathcal{N}^\perp: \|Y-I_H\|\leq\frac{1}{2}\right\}$$
is weak* compact and metrizable. Hence, there exists a countable subset of self-adjoint operators $\{I_H, \tilde{F}_2, \tilde{F}_3,\ldots \}$ which is dense in $B$ in the weak* topology.  Since $\|I_H-\tilde{F}_k\|\leq\frac{1}{2}$, each self-adjoint operator $\tilde{F}_k$ is positive with the operator-norm $\|\tilde{F}_k\|\leq 2$. Let $F_k=\frac{1}{2^k}\tilde{F}_k$ for $k\geq 2$ and $F_1=I_H-\sum_{k=2}^{\infty}F_k$. Then $\|I_H-F_1\|=\|\sum_{k=2}^{\infty}F_k\|\leq \sum_{k=2}^{\infty}\frac{1}{2^{k-1}}=1$. Hence $\{F_k\}_{k=1}^\infty$ is a set of self-adjoint positive operators in $B(H)$ with $\sum_{k=1}^{\infty}F_k=I_H$ and its linear span is weak* dense in $\mathcal{N}^\perp$.

Let $R_k=e_ke_k^*$ be states in $B(H)$. Consider a SEB channel $\Phi:\mathcal{T}(H)\to\mathcal{T}(H)$ defined by
\begin{equation*}
	\Phi(X)=\sum_{k=1}^\infty R_k\operatorname{Tr}(F_k X).
\end{equation*} 
Then $X\in\operatorname{null}(\Phi)$ if and only if $\operatorname{Tr}(XF_k)=0$ for all $k$. This happens exactly for $X\in(\mathcal{N}^\perp)^\perp=\mathcal{N}$ since $\mathcal{N}$ is a closed subspace in trace-norm. Hence $\operatorname{null}(\Phi)=\mathcal{N}$ as asserted.
\end{proof}

Throughout the remainder of this section, we delve into examining the fixed point set and the multiplicative domain of SEB channels by leveraging the operator-sum representation through rank-one operations as in \eqref{rank1rep}. It is worth recalling that the fixed point set of a map $\Psi: Z \to Z$ is defined as $\operatorname{Fix}(\Psi) := \{T \in Z : \Psi(T) = T\}$. For the details on the characterization of fixed point sets of quantum operations, refer to \cite{AGG02, Li11}.

Let $\Phi:\mathcal{T}(H)\to\mathcal{T}(H)$ be a SEB channel defined as in \eqref{rank1rep} by
\begin{equation}\label{strongEB}
	\Phi(X)=\sum_{k=1}^\infty u_kv_k^*X v_ku_k^*=\sum_{k=1}^{\infty}E_kXE_k^*,
\end{equation}
where $E_k=u_kv_k^*$ for $u_k, v_k\in H$ satisfying $\|u_k\|=1$ for all $k$ and $\sum_{k=1}^{\infty}v_kv_k^*=I_H$. Then $\Phi^*:B(H)\to B(H)$ is an unital completely positive operator with rank-one Kraus operations $\{E_k^*=v_ku_k^*\}$.

Suppose a projection $P\in \operatorname{Fix}(\Phi^*)$. Then $\sum_{k=1}^{\infty}E_k^*PE_k=P$. By multiplying from both sides of this equation with $I_H-P$, we derive $PE_k^*(I_H-P)=0$ for all $k$. Similarly, by multiplying both sides of equation $\Phi^*(I_H-P)=I_H-P$ by $P$, we get $(I_H-P)E_k^*P=0$. Hence, $PE_k^*=E_k^*P$ for all $k$. Taking the adjoint on both sides, we deduce  $[P,E_k]=PE_k-E_kP=0$. Substitute $E_k=u_kv_k^*$ we obtain
$$Pv_k=Pv_k(u_k^*u_k)=P(v_ku_k^*)u_k=(v_ku_k^*)(Pu_k)=v_k(u_k^*Pu_k).$$
Thus, $Pv_k=\lambda_kv_k$ for some scalar $\lambda_k$. Similarly, $Pu_k=\beta_ku_k$ for some scalar $\beta_k$. Hence, $v_k$ is an eigenvector of $P$ for all $k$. Let $P,Q$ be projections in $\operatorname{Fix}(\Phi^*)$. Then for each $k$, there are scalars $\lambda_k,\beta_k$ such that $Pv_k=\lambda_kv_k$ and $Qv_k=\beta_kv_k$. Thus,
\begin{equation}\label{commutative}
	\begin{array}{rl}
		 PQ&=PQ\displaystyle\left(\sum_{k=1}^{\infty}v_kv_k^*\right)=P\sum_{k=1}^{\infty}\beta_kv_kv_k^*=\sum_{k=1}^{\infty}\beta_k\lambda_kv_kv_k^*\\
		&=\displaystyle\sum_{k=1}^{\infty}\lambda_kQv_kv_k^*=\sum_{k=1}^{\infty}QPv_kv_k^*=QP.
	\end{array}
\end{equation}
Then $P$ and $Q$ are commutative. Let $\mathcal{A}=\{A\in B(H):[A,E_k]=[A,E_k^*]=0\}$. Since $\mathcal{A}$ is spanned by its projections, $\mathcal{A}$ is a commutative von Neumann subalgebra of $\operatorname{Fix}(\Phi^*)$. For infinite-dimensional systems, the inclusion $\mathcal{A}\subseteq\operatorname{Fix}(\Phi^*)$ can be strict, see \cite{AGG02}. 
From the above observations, we have
\begin{Proposition}
Let $\Phi$ be a SEB channel on $\mathcal{T}(H)$ defined by \eqref{strongEB} with rank-one operations $\{E_k=u_kv_k^*\}$. A projection $P$ is a fixed point of $\Phi^*$ if and only if $[P,E_k]=0$ for all $k$. In this case, all vectors $u_k$ and $v_k$ are eigenvectors of $P$. The set of projections in $\operatorname{Fix}(\Phi^*)$ is commutative. 
\end{Proposition}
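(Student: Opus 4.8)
The plan is to prove the three assertions in the order they are stated, since each builds on the previous. The central computation is already sketched in the paragraph preceding the proposition, so my job is to organize it cleanly.

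First I would establish the equivalence that a projection $P$ is a fixed point of $\Phi^*$ if and only if $[P,E_k]=0$ for all $k$. The forward direction is the delicate one and I would argue it as follows. Since $\Phi^*$ is unital, $\Phi^*(I_H-P)=I_H-P$ whenever $\Phi^*(P)=P$. Writing out $\sum_k E_k^*PE_k=P$ and multiplying by $(I_H-P)$ on both sides yields $\sum_k (I_H-P)E_k^*PE_k(I_H-P)=0$; each summand is of the form $A_k^*A_k$ with $A_k=PE_k(I_H-P)$, hence positive, so each must vanish, giving $PE_k(I_H-P)=0$. Applying the same reasoning to $\Phi^*(I_H-P)=I_H-P$ multiplied by $P$ gives $(I_H-P)E_kP=0$. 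Adding these two identities yields $PE_k=E_kP$, i.e. $[P,E_k]=0$. The converse is immediate: if $[P,E_k]=0$ then $\sum_k E_k^*PE_k=P\sum_k E_k^*E_k=P\Phi^*(I_H)=P$.

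Next I would record that $u_k$ and $v_k$ are eigenvectors of $P$. This is precisely the computation displayed in the excerpt: from $[P,E_k]=0$ with $E_k=u_kv_k^*$ and $\|u_k\|=1$, one gets $Pv_k=(u_k^*Pu_k)v_k$, so $v_k$ is an eigenvector; taking adjoints gives $[P,E_k^*]=0$ and the symmetric computation shows $Pu_k=(v_k^*Pv_k)u_k$, so $u_k$ is an eigenvector as well. I would simply cite the displayed lines rather than redo them.

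Finally, for the commutativity of the projections in $\operatorname{Fix}(\Phi^*)$, I would reproduce the chain of equalities \eqref{commutative}. Given two projection fixed points $P,Q$, each $v_k$ is a simultaneous eigenvector with $Pv_k=\lambda_kv_k$ and $Qv_k=\beta_kv_k$; inserting the resolution $\sum_k v_kv_k^*=I_H$ between $P$ and $Q$ and pushing the scalars past the operators shows $PQ=\sum_k\beta_k\lambda_k v_kv_k^*=QP$. I anticipate the main obstacle is the positivity argument in the forward direction of the first step: one must justify that a sum of positive operators equalling zero forces each term to vanish, and that $PE_k(I_H-P)=0$ together with its partner genuinely recovers the full commutator rather than merely off-diagonal vanishing. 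Once that positivity step is handled, the remaining assertions are formal manipulations using the eigenvector property and the completeness relation $\sum_k v_kv_k^*=I_H$.
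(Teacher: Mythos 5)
Your proposal is correct and follows essentially the same route as the paper's own argument: multiplying the fixed-point equations $\Phi^*(P)=P$ and $\Phi^*(I_H-P)=I_H-P$ by $I_H-P$ and $P$ respectively, killing each positive summand to get the two off-diagonal blocks of $E_k$ to vanish, then using $E_k=u_kv_k^*$ for the eigenvector claim and the resolution $\sum_k v_kv_k^*=I_H$ for commutativity exactly as in \eqref{commutative}. You even add the trivial converse (which the paper leaves implicit), and your only cosmetic slip is the unnormalized scalar in $Pu_k=(v_k^*Pv_k)u_k$, which should be $(v_k^*Pv_k)/\|v_k\|^2$ since the $v_k$ need not be unit vectors.
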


The study of fixed point sets is useful for the investigation of the multiplicative domain of a map. Recall that a \textit{multiplicative domain} of a map $\Phi$ on $\mathcal{T}(H)$ is the set (see e.g. \cite{RJP18}),
$$\mathcal{M}_\Phi=\{A\in\mathcal{T}(H):\Phi(AX)=\Phi(A)\Phi(X), \Phi(XA)=\Phi(X)\Phi(A) \hbox{ for all } X\in\mathcal{T}(H) \}.$$

\begin{Proposition}\label{prop-eig}
Let $\Phi$ be a SEB channel on $\mathcal{T}(H)$ defined by \eqref{strongEB} with rank-one operations $\{E_k=u_kv_k^*\}$. If a projection $P$ is in the multiplicative domain $\mathcal{M}_{\Phi}$, then $v_k$ are eigenvectors of $P$ for all $k$. The set of projections in $\mathcal{M}_{\Phi}$ is commutative.
\end{Proposition}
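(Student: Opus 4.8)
The plan is to reduce the multiplicative hypothesis on $\Phi$ to an intertwining relation between the Kraus data $\{E_k=u_kv_k^*\}$ and the two projections $P$ and $\Phi(P)$. Because $\Phi$ is only trace-preserving and not unital, the classical Kadison--Schwarz/Choi description of a multiplicative domain does not apply to $\Phi$ directly, so I would instead work through the unital completely positive dual $\Phi^*(Y)=\sum_k E_k^*YE_k$, which satisfies $\sum_k E_k^*E_k=\sum_k v_kv_k^*=I_H$. First I would record two consequences of $P\in\mathcal{M}_\Phi$. Putting $X=P$ in $\Phi(PX)=\Phi(P)\Phi(X)$ and using that $\Phi$ preserves adjoints (as observed in the proof of Theorem \ref{Thm2.2}) shows that $Q:=\Phi(P)$ satisfies $Q^2=\Phi(P)\Phi(P)=\Phi(PP)=\Phi(P)=Q$ and $Q^*=Q$, so $Q$ is a projection. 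Next, for every $X\in\mathcal{T}(H)$ the chain $\operatorname{Tr}(\Phi^*(Q)X)=\operatorname{Tr}(Q\Phi(X))=\operatorname{Tr}(\Phi(P)\Phi(X))=\operatorname{Tr}(\Phi(PX))=\operatorname{Tr}(PX)$, where the fourth equality uses $P\in\mathcal{M}_\Phi$ and the last uses trace preservation, yields $\Phi^*(Q)=P$.

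The heart of the argument is then a single ``sum of squares'' identity. Setting $T_k:=QE_k-E_kP$ and expanding, I would compute
\[
\sum_k T_k^*T_k=\Phi^*(Q^2)-\Phi^*(Q)P-P\Phi^*(Q)+P\Big(\sum_k E_k^*E_k\Big)P=P-P-P+P=0,
\]
using $Q^2=Q$, $\Phi^*(Q)=P$, and $\sum_k E_k^*E_k=I_H$. Since each summand $T_k^*T_k$ is positive and the series converges to $0$ in the weak operator topology, every term vanishes, giving the intertwining relations $QE_k=E_kP$ for all $k$. Writing $E_k=u_kv_k^*$ and noting $v_k^*P=(Pv_k)^*$, this becomes the rank-one identity $(Qu_k)v_k^*=u_k(Pv_k)^*$. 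Evaluating both sides at $v_k$ gives $\|v_k\|^2\,Qu_k=\|Pv_k\|^2\,u_k$, so $u_k$ is an eigenvector of $Q$; substituting $Qu_k=\mu_ku_k$ back into the rank-one identity and cancelling the nonzero vector $u_k$ forces $Pv_k=\mu_kv_k$. Hence every $v_k$ is an eigenvector of $P$, which is the first assertion.

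Finally, for two projections $P,P'\in\mathcal{M}_\Phi$ the previous step makes every $v_k$ a common eigenvector, and the relation $\sum_k v_kv_k^*=I_H$ then yields $PP'=P'P$ by exactly the computation in \eqref{commutative}; thus the set of projections in $\mathcal{M}_\Phi$ is commutative. The step I expect to be the main obstacle is deriving $\Phi^*(Q)=P$ and then recognizing the telescoping of $\sum_k T_k^*T_k$: this is precisely where trace preservation of $\Phi$ and unitality of $\Phi^*$ must be combined, and it is the device that substitutes for the Kadison--Schwarz inequality, which is unavailable for the non-unital map $\Phi$ itself.
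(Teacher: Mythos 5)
Your proof is correct, and its opening move coincides with the paper's: the chain $\operatorname{Tr}(PX)=\operatorname{Tr}(\Phi(PX))=\operatorname{Tr}(\Phi(P)\Phi(X))=\operatorname{Tr}((\Phi^*\circ\Phi)(P)X)$, giving $(\Phi^*\circ\Phi)(P)=P$, i.e.\ $\Phi^*(Q)=P$ with $Q=\Phi(P)$, is exactly the paper's first step. Where you diverge is in how the eigenvector relation is extracted. The paper stays with the composed map $\Phi^*\circ\Phi$, whose Kraus operators are $E_i^*E_j$: multiplying the fixed-point identity on both sides by $I_H-P$ gives $\sum_{i,j}\bigl[(I_H-P)E_i^*E_jP\bigr]\bigl[(I_H-P)E_i^*E_jP\bigr]^*=0$, hence the one-sided relations $E_i^*E_jP=PE_i^*E_jP$; taking $i=j$ (so that $E_i^*E_i=v_iv_i^*$) yields $\bigl((I_H-P)v_i\bigr)(Pv_i)^*=0$ and thus $Pv_i=\lambda_iv_i$ with $\lambda_i\in\{0,1\}$. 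You instead first observe that $Q=\Phi(P)$ is itself a projection and then run the sum-of-squares argument with $T_k=QE_k-E_kP$, obtaining the exact intertwining $QE_k=E_kP$. This is a genuinely stronger conclusion than the paper extracts: besides $Pv_k=\mu_kv_k$ it shows each $u_k$ is an eigenvector of $Q$, mirroring the structure the paper establishes only for fixed-point projections of $\Phi^*$ in the preceding Proposition. The cost is a little bookkeeping you should make explicit: splitting $\sum_k T_k^*T_k$ into four series requires that each converges (this is fine, since the partial sums of $\sum_k E_k^*QE_k$ and $\sum_k E_k^*E_k$ converge in the weak* topology by normality of $\Phi^*$ and the normalization $\sum_k E_k^*E_k=I_H$), and the final cancellation needs $v_k\neq 0$ (harmless: if $v_k=0$ then $E_k=0$ and the claim for that index is vacuous). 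Your closing commutativity step via $\sum_k v_kv_k^*=I_H$ is identical to the paper's computation \eqref{commutative}.
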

\begin{proof}
	It is known that if $P\in\mathcal{M}_\Phi$ then $P\in\operatorname{Fix}(\Phi^*\circ\Phi)$. Indeed, for each $X\in\mathcal{T}(H)$, we have
	$$\operatorname{Tr}(PX)=\operatorname{Tr}(\Phi(PX))=\operatorname{Tr}(\Phi(P)\Phi(X))=\operatorname{Tr}((\Phi^*\circ\Phi)(P)X).$$ Hence, $(\Phi^*\circ\Phi)(P)=P$. This implies $E_i^*E_jP=PE_i^*E_jP$ for all $i,j$. Substitute $E_i=u_iv_i^*$, we derive at $Pv_i=\lambda_iv_i$ for some scalar $\lambda_i$. The commutativity is similar as in \eqref{commutative}.  
\end{proof}

%%%%%%%%%%%%%%%%%%%%%%%%%%%%%
\section{Conclusions}
The paper establishes characterizations of strongly entanglement breaking (SEB) channels tailored for infinite-dimensional systems. It unveils properties akin to their finite-dimensional counterparts, as documented in works such as \cite{BaHo24, KLOPR21, RJP18}. Notably, the paper demonstrates that a channel is SEB if its range is commutative. Furthermore, it reveals that any SEB channel can be dilated to the predual of a positive map with commutative range. Additionally, a SEB channel is constructed with a null space that aligns with a specified self-adjoint and closed subspace of trace-zero operators. The discussion also delves into the commutativity of projections within the fixed point set and the multiplicative domain of these channels. The approach intertwines techniques from the finite-dimensional realm with expanded outcomes in operator theory tailored for infinite-dimensional systems, as expounded in \cite{GuoHou12, LD15, Yashin20}.

\section*{Acknowledgments}
Research of Sze was supported by a HK RGC grant
PolyU 15300121 and a PolyU research grant 4-ZZRN.
The HK RGC grant also supported the post-doctoral fellowship
of Muoi at the Hong Kong Polytechnic University.

%\section*{Acknowledgment}


\begin{thebibliography}{99}
	 \bibitem{AGG02} A. Arias, A. Gheondea, S. Gudder, Fixed points of quantum operations, \textit{J. Math. Phys.} 43, (2002) 5872--5881.
	
	\bibitem{BaHo24} S. Balasubramanian, N. Hotwani, $\operatorname{C}^*$-extreme entanglement breaking maps on
	operator systems, \textit{Linear Algebra and Appl.} 685 (2024) 182--213.
	
	\bibitem{DSS24} R. Devendra, G. Sapra, and K. Sumesh, Degradable strongly entanglement breaking maps,  \textit{Linear Algebra and Appl.} 703 (2024) 302--328.
	
	
	\bibitem{GuoHou12} Y. Guo, J. Hou, A class of separable quantum states, \textit{J. Phys. A: Math. Theor.} 45 (2012) 505303.

	\bibitem{Hou10} J. Hou, A characterization of positive linear maps and criteria of entanglement for quantum states, \textit{J. Phys. A: Math. Theor.} 43 (2010), no. 38, 385201.
	
	\bibitem{HouChai17} J. Hou, J. Chai, Constructing separable states in infinite-dimensional systems by operator matrices, \textit{Internat. J. Theoret. Phys.} 56 (2017), no.6, 2028--2037.
	
	\bibitem{He13} K. He, On entanglement breaking channels for infinite dimensional quantum systems, \textit{Int. J. Theor. Phys.} 52 (2013), 1886--1892.

	\bibitem{HSW05} A. S. Holevo, M. E. Shirokov, R. F. Werner, Separability and entanglement-breaking in infinite dimensions, \textit{Russian Math. Surveys}, 60 (2005).
	
	\bibitem{Holevo98} A. S. Holevo, Quantum coding theorems, \textit{Uspekhi Mat. Nauk}, 1998, vol. 53, no. 6, pp. 193–230 [Russian Math. Surveys (Engl. Transl.), 1998, vol. 53, no. 6, 1295--1331].

	\bibitem{HSR03} M. Horodeck, P. Shor, M. Ruskai, Entanglement breaking channels, \textit{Rev. Math. Phys.}, 15 (2003) 629--641. 
	
	\bibitem{Stormer08} E. Stormer, Separable states and positive maps, \textit{J. Funct. Anal.} 254 (2008), 2303--2312.

	\bibitem{KLOPR21} D. Kribs, J. Levick, K. Olfert,
R. Pereira and M. Rahaman, Nullspaces of entanglement breaking channels and applications, \textit{J. Phys. A: Math. Theor.}, 54 (2021) 105303.

	\bibitem{KLPR22} D. Kribs, J. Levick, R. Pereira, and M. Rahaman, Entanglement breaking rank via complementary channels and
multiplicative domains, Operator Theory: Advances and Applications (IWOTA Lancaster 2021 proceedings) (2021) 183--198.

\bibitem{Li11} Y. Li, Characterizations of fixed points of quantum operations,  \textit{J. Math. Phys.}, 52  (2011)  052103.

	\bibitem{LD15} Y. Li, H.-K. Du, Interpolations of entanglement breaking channels and equivalent conditions for completely positive maps, \textit{J. Funct. Anal.} 268, no. 11 (2015), 3566–3599.


	\bibitem{Paul02} V. Paulsen, \textit{Completely Bounded Maps and Operator Algebras}, Cambridge University Press, 2002.

	\bibitem{Takesaki02} M. Takesaki,  \textit{Theory of Operator Algebras I}, Encyclopedia of Mathematical Sciences Vol. 124, (Springer-Verlag, Berlin, Heidelberg, New York) 2002.

	\bibitem{RJP18} M. Rahaman, S. Jaques, V. Paulsen, Eventually entanglement breaking maps, \textit{J. Math. Phys.}, 59 (2018) 062201.

	\bibitem{Yashin20} V. I. Yashin, Properties of operator systems, corresponding to channels, \textit{Quantum Inf. Process.} 19 (2020), no. 7.
\end{thebibliography}
\end{document}